\newtheorem{theorem}{Theorem}[section]
\newtheorem{lemma}{Lemma}[section]
\newtheorem{definition}{Definition}[section]
\title{Perturbation Analysis and Neural Network-Based Initial Condition Estimation for the Sine-Gordon Equation}
\author{
Junhong Ha\\
School of Liberal Arts,\\
Korea University of Technology and Education,\\
Cheonan 31253, Korea\\
\texttt{hjh@koreatech.ac.kr}
\and
Sudeok Shon\\
School of Architecture Engineering,\\
Korea University of Technology and Education,\\
Cheonan 31253, Korea\\
\texttt{sdshon@koreatech.ac.kr}
}
\date{}
\begin{document}
\maketitle

\begin{center}
\begin{minipage}[c]{12cm}
{\small ABSTRACT.
The phase difference in Josephson junctions and superconducting systems follows the sine-Gordon equation, which describes soliton dynamics and phase evolution. These phase differences have also been explored as potential mechanisms for realizing quantum phase gates. Many integrable soliton solutions to the sine-Gordon equation are known. In this study, we consider solutions to the perturbed sine-Gordon equation as perturbations around soliton profiles and develop a mathematical framework to analyze the stability, with a particular focus on the boundedness of these perturbations. This analysis relies on initial conditions being sufficiently close to a soliton shape. However, identifying such initial states with precision is often a challenge. To address this, we introduce a data-driven method using neural networks to estimate the initial conditions of the system.
}
\end{minipage}
\end{center}

\vspace{0.5cm}

{\small 2020 Mathematics Subject Classification 35Q51, 35R30, 68T07}

{\small Key words and phrases  sine-Gordon equation, soliton dynamics, perturbation, identification, neural networks}

\section{Introduction}

The sine-Gordon equation is a fundamental nonlinear partial differential equation that models a wide range of physical systems, including wave propagation in Josephson junctions, nonlinear optics, and soliton dynamics in condensed matter physics \cite{Scott2000}, \cite{RyogoHirota1972}, \cite{Scott1976}. In superconducting circuits, particularly those involving Josephson junctions, the phase difference between superconducting wave functions plays a critical role in determining system behavior. This phase difference, denoted \( u(x,t) \), evolves according to the dimensionless sine-Gordon equation
\[
\frac{\partial^2 u}{\partial t^2} - \frac{\partial^2 u}{\partial x^2} + \sin u = g(x,t),
\]
where \( g(x,t) = I_b + H \), with \( I_b \) representing an external current and \( H \) denoting the gradient of a magnetic field. Perturbative effects such as damping and external forcing further influence the system dynamics \cite{Ustinov2001}, \cite{Gulevich2006}, \cite{Levi1978}, \cite{Gonzalez2002}, \cite{Nakajima1974}, \cite{SALERNO1983}. In fact, the right-hand side \( g(x,t) \) can often be expressed as a linear combination of terms such as
\[
\cos(\omega t),\ \cos(\omega x),\ \frac{\partial u}{\partial t},\ \frac{\partial^3 u}{\partial t \partial x^2},\ \delta(x-x_0),\ \mu(x-x_0) \sin u,\ p(x,t),
\]
where \( \mu(x) \) approximates the Dirac delta function or takes a small sinusoidal form, and \( p(x,t) \) may include discontinuities. When \( p(x,t) \) is modeled as a Wiener process, the existence of a compact random attractor has been shown \cite{Fan2004}. Perturbation theories for variations such as the double sine-Gordon equation have also been studied \cite{Popov2005}.

A hallmark feature of the sine-Gordon equation is its soliton solutions, localized waveforms that maintain their shape while propagating \cite{RyogoHirota1972}, \cite{Scott1973}. In the absence of external forces, the equation admits exact solutions including kinks, anti-kinks, and breathers. These have found applications in optical communication, condensed matter systems, and superconducting technologies. Various analytical techniques, such as the tanh method, rational exp-function method, and sine–cosine method, have been used to study these solutions \cite{Sickotra2021}, \cite{Rezazadeh2023}, \cite{IVANCEVIC2013}, \cite{WAZWAZ2006}.

To derive integrable forms or approximate behaviors, linearized models are frequently adopted \cite{BFL}, \cite{Bishop1986}, \cite{Kivshar1989}. Within the Hilbert space framework, the sine-Gordon equation has been rigorously analyzed in terms of existence, uniqueness, stability, optimal control, and attractor properties \cite{Ha-Na2}, \cite{GH2011}, \cite{Temam1988}, \cite{Shahruz2001}.

The rapid progress in quantum computing has further renewed interest in this equation \cite{Scott2000}. The function \( u(x,t) \), representing the phase difference, may be used in quantum logic operations such as phase gates, defined by \( R_{u(x,t)} = \text{diag}(1, e^{i u(x,t)}) \), or as a phase difference between qubit states, represented as \( u(x,t) = \theta_\beta(x,t) - \theta_\alpha(x,t) \) in the qubit state \( |\alpha| e^{i \theta_\alpha} |0\rangle + |\beta| e^{i \theta_\beta} |1\rangle \). This highlights the importance of understanding the behavior of \( u(x,t) \).

In practical modeling, especially in the context of Josephson junctions, solutions are often constructed as a known soliton profile \( \phi(x,t) \) plus a perturbation \( \epsilon \eta(x,t) \), written as  
\[
u(x,t) = \phi(x,t) + \epsilon \eta(x,t),
\]  
where \( \phi(x,t) \) satisfies the unperturbed equation and \( \eta(x,t) \) captures the influence of external perturbations.

This naturally raises the question of the conditions under which the soliton structure is preserved in the presence of such perturbations.

Since \( \eta \) reflects the influence of external forces, its boundedness is key to soliton stability. If \( \eta \) remains bounded over time, the soliton structure is likely preserved. Otherwise, unbounded growth may indicate collapse or a qualitative change in the solution. Thus, analyzing the boundedness of perturbations is essential to understanding soliton stability under realistic conditions.

This study develops a rigorous mathematical framework to analyze the boundedness and stability of the perturbation \( \eta \). However, as \( \eta \) is highly sensitive to the initial conditions of \( u(x,t) \), accurate determination of these conditions is crucial yet often challenging. To address this, we propose a data-driven approach using neural networks (NN) to estimate initial states from limited or noisy observations \cite{Brunton2019}.

The structure of this paper is as follows. Section 2 presents the theoretical groundwork for bounding perturbations using Lions’ variational method, depending on the regularity of the external force. The approach is rooted in the foundational techniques developed in \cite{GH2011}, \cite{Temam1988}, \cite{HaNa2002}.  
Section 3 examines numerical simulations to compare the effects of linearized versus nonlinear perturbation approaches \cite{Bishop1986}, \cite{Kivshar1989}.  
Section 4 introduces a neural network-based estimation technique for identifying the initial conditions of the sine-Gordon equation, demonstrating its effectiveness in improving perturbation analysis by accurately recovering the underlying initial states.

\section{Perturbation Analysis}

In this section, we analyze how perturbations influence the sine-Gordon soliton solutions and their stability. We consider both time-dependent and spatiotemporal perturbations and derive governing equations for the perturbation function. The perturbative approach allows us to assess the impact of external forces, boundary conditions, and small deviations in the initial conditions on soliton dynamics.

We begin our analysis by considering the perturbed form of the sine-Gordon equation in the presence of an external force
\begin{equation}\label{SGPer}  
    u_{tt} - u_{xx} + \sin u = \epsilon g(x,t).
\end{equation}

When \(g = 0\), equation \eqref{SGPer} admits an integrable solution, which we refer to as a soliton, denoted by $\phi$.
In fact, multiple soliton solutions exist in the sine-Gordon equation, including kink–kink, kink–antikink, static breather, and moving breather forms 
\cite{Sickotra2021}. These solitons can be derived using various analytical methods such as the tanh method, rational exp-function method, sech method, extended tan method, and sine–cosine method, providing a deeper mathematical understanding of their properties \cite{Rezazadeh2023}, \cite{WAZWAZ2006}.

A notable property of solitons is that they maintain their shape while traveling at a constant speed. For example, the soliton
\begin{equation}\label{soliton1}
\phi(x,t) = 4 \arctan \left[ \exp\left( \gamma (x -vt - x_0) \right) \right], \quad \gamma = (1-v^2)^{-1/2},
\end{equation}
satisfies the asymptotic conditions
\begin{equation}\label{soliton2}
\phi(x,t) \to 0 \quad \text{as} \quad x \to -\infty, \quad \text{and} \quad \phi(x,t) \to 2\pi \quad \text{as} \quad x \to \infty.
\end{equation}
The soliton in \eqref{soliton1} is a unique solution of the initial-boundary value problem
\begin{align*}
  &  \phi_{tt} - \phi_{xx} + \sin \phi =0, \quad -\infty < x < \infty, \quad  t >0, \\
  &  \phi(x,0) = 4 \tan^{-1} \left(e^{\gamma (x - x_0)} \right), \ \
    \phi_t(x,0) = \frac{-2 \gamma v }{\cosh(\gamma x)}, \quad -\infty < x < \infty, \\
  &  \phi(-\infty,t)=0, \quad \phi_x(\infty,t)= 2 \pi, \quad  t \ge 0.
\end{align*}
As an alternative boundary condition, we can impose
\[
\phi_x(-L,t) = \phi_x(L,t), \quad  t \ge 0,
\]
where $L$ is a positive number, which can be arbitrarily large or infinite.

For a small perturbation \(\epsilon g(x,t) \), we wish to express the solution of \eqref{SGPer} as
\begin{equation}\label{pertsol1-2}
    u(x,t) = \phi(x,t) + \epsilon \eta(x,t),
\end{equation}
where $\phi(x,t)$ is the unperturbed soliton, and $\epsilon \eta(x,t)$ represents a small perturbation with $0<\epsilon \le 1$. Substituting \eqref{pertsol1-2} into \eqref{SGPer}, we obtain
\begin{equation}\label{pertsol1-3}
     \eta_{tt} -  \eta_{xx} + \epsilon^{-1} (\sin(\phi+\epsilon \eta) - \sin \phi) = g(x,t).
\end{equation}
To solve for \( \eta(x,t) \) in (\ref{pertsol1-3}), we must specify appropriate initial and boundary conditions. These conditions depend on those of \( u \). It is reasonable to assume that \( \eta(x,t) \) inherits the conditions of \( u \), ensuring consistency with the original system's behavior.

In \cite{SALERNO1983}, a numerical solution of the perturbed sine-Gordon equation is provided for the system
\begin{align*}
 & u_{xx}-u_{tt}- \sin u = g(x,t, u), \\
 & u(x,0) = \phi(x,0), \ \ u_t(x,0) = \phi_t(x,0), \ \
  u_x(-13,t) = u_x(13,t) = 0, 
\end{align*}
where \( g(x,t,u) = \alpha u_t - \beta u_{xxt} + \gamma + \mu(x) \sin u \), with \( \mu(x) \) serving as an approximation of the Dirac delta function \( \delta(x) \). This formulation accounts for damping effects and localized forcing, offering insights into the behavior of the sine-Gordon equation under various external influences.

It is evident that \( u = \phi + \epsilon \eta \) is the solution of the system
\begin{align*}
 &   u_{tt} - u_{xx} + \sin u = \epsilon g(x,t), \\
  & u(x,0) = \phi(x,0), \ \ u_t(x,0) = \phi_t(x,0), \ \
   u_x(-L,t) = u_x(L,t) = 0, 
\end{align*}
if and only if \( \eta \) is the solution of the system 
\begin{align*}
& \eta_{tt} - \eta_{xx} + \epsilon^{-1} (\sin(\phi+\epsilon \eta) - \sin \phi) =  g(x,t), \\
& \eta(x,0) = \eta_t(x,0) = 0, \quad \eta_x(-L,t) = \eta_x(L,t) = 0.
\end{align*}
This reformulation, expressed as a deviation \( \eta(x,t) \), enables a direct analysis of how external forcing influences the soliton.

However, when oscillatory perturbations exist, such as \( g(x,t) = I_b + H = A \cos(\omega_1 t) + B \sin(\omega_2 x) \), the initial condition can incorporate both the soliton solution and oscillatory perturbations. For instance, the initial condition may take the form  
\[
u(x,0) = \phi(x,0) + \epsilon\cos(\omega_0 x).
\]  
In this case, the perturbation function \( \eta(x,t) \) satisfies  
\[
\eta(x,0) = \cos(\omega_0 x).
\]  
Therefore, if 
\[
u(x,0) = \phi(x,0) + \epsilon u_0(x), \ \ 
u_t(x,0) = \phi_t(x,0) + \epsilon v_0(x),
\]
then initial and boundary conditions for \( \eta \) are given by 
\begin{align*}
  &  \eta(x,0) = u_0(x), \quad \eta_t(x,0) = v_0(x), \quad -L < x < L, \\
  &  \eta(-L,t) = \eta(L,t) = 0 \quad \text{or} \quad \eta_x(-L,t) = \eta_x(L,t) = 0, \quad  t \geq 0.
\end{align*}  
These conditions ensure that the perturbation function \( \eta(x,t) \) appropriately captures the effects of initial oscillations and external forcing while preserving the necessary boundary constraints.

Since the equation is nonlinear, finding an explicit solution for \( \eta(x,t) \) is not straightforward. To simplify the analysis, one applies a first-order approximation for small \( \epsilon \), leading to the linearized equation 
\[
    \eta_{tt}  -  \eta_{xx} + \cos \phi \cdot \eta = g(x,t).
\]

In \cite{Fogel1977}, the perturbation \( \eta \) is assumed to be small, satisfying \( |\eta| < 1 \) with \( \epsilon = 1 \). They further assumed a harmonic time dependence of the form  
\(
    \eta(x,t) = f(x) e^{-i \omega t}
\)
and analyzed \( f(x) \) under the condition \( g(x,t) = 0 \).  Moreover, they investigated a case where the perturbation function includes a localized source, specifically  
\(
    g(x,t) = \delta(x + x_0) - \delta(x - x_0).
\)
This formulation allows for studying the impact of localized perturbations on soliton stability and dynamics. 

Now, we consider the initial and boundary problem  
\begin{align}
  &  \eta_{tt} -  \eta_{xx} + \epsilon^{-1}(\sin(\phi+\epsilon \eta) - \sin \phi)= g(x,t), \quad -L < x < L, \quad  t >0, \label{etaeq}\\
  &  \eta(x,0) = u_0(x), \ \ \eta_t(x,0) = v_0(x), \quad -L < x < L, \label{etaini}\\
  &  \eta(-L,t)=\eta(L,t)=0 \ \ \text{or} \ \ \eta_x(-L,t)=\eta_x(L,t)=0, \quad  t \ge 0. \label{etabd}
\end{align}

Taking \( \epsilon = 1 \) and \( \phi = 0 \) in (\ref{etaeq}), the equation reduces to the original sine-Gordon equation. Consequently, all theoretical results concerning the sine-Gordon equation remain valid under these conditions. It is well known that the existence, uniqueness, regularity, and stability of solutions depend on the initial and boundary conditions, as well as the external forcing with \( \epsilon = 1 \) and \( \phi = 0 \), see \cite{Ha-Na2}, \cite{Lionsop}.

Soliton solutions provide a fundamental framework for analyzing the boundedness and stability of perturbation terms \( \eta \). Studying the boundedness of \( \eta \) is crucial, as it determines whether the soliton structure is preserved under external influences. If the perturbation remains bounded, the soliton is likely to persist, whereas divergence may lead to its collapse. Thus, perturbation analysis plays a key role in quantitatively understanding the long-term behavior and stability of solitons, particularly in the presence of external forces and dissipative effects.

To formulate the equation as an ordinary differential equation, we introduce abstract Hilbert spaces. Let \( I = (-L, L) \) be an open interval in \( \mathbb{R} \). Define \( H = L^2(I) \) as the Hilbert space equipped with the inner product  
\(
(u, v) = \int_{-L}^{L} u(x) v(x) \,dx
\)  
and the associated norm  
\(
|u|=\sqrt{(u,u)}.
\)  
Here, \( L \) may be infinite.  

Let \( V \) be a Hilbert space that is continuously embedded in \( H \) and is equipped with an inner product \( ((u, v)) \) and norm \( \|u\| \). We consider the standard choices  
\[
V = H_0^1(I) \quad \text{or} \quad V = H_N^1(I) = \{ u \in H^1(I) \mid u_x(\pm L) = 0 \}.
\]  
The space \( V^* \) denotes the dual space of \( V \) with the dual pairing 
\(
\langle u, v \rangle, \quad u \in V^*, \ v \in V.
\)  
The choice of \( V \) determines the boundary conditions imposed on the system. When \( V = H_0^1(I) \), we assume homogeneous Dirichlet boundary conditions, while \( V = H_N^1(I) \) corresponds to Neumann boundary conditions. These function spaces provide the appropriate framework for analyzing weak solutions and their stability properties.

Let us define a bilinear form \( a(u, v) \) from \( V \times V \) to \( \mathbb{R} \) as  
\[
a(u, v) = (u_x, v_x).
\]  
Then, \( a(u, v) \) is symmetric and satisfies the boundedness condition  
\[
|a(u, v)| \leq \|u\|_V \|v\|_V, \quad \forall u, v \in V.
\]
Moreover, we can define the bounded linear operator \( \mathcal{A} \in \mathcal{L}(V, V^*) \) by the relation  
\[
a(u, v) = \langle \mathcal{A}u, v \rangle, \quad \forall u, v \in V.
\]  
This operator \( \mathcal{A} \) plays a fundamental role in the variational formulation of the problem and provides a rigorous framework for studying the existence, uniqueness, and stability of weak solutions in the Hilbert space setting.

The function \( f(t,\eta) \) is defined on \( H \) as  
\[
f(t,\eta)(x) = \epsilon^{-1} (\sin \phi(x,t) - \sin (\phi(x,t)+\epsilon \eta(x,t)) ) + g(x,t), \quad x \in I.
\]
Then, \( f(t, \eta) \) satisfies the Lipschitz continuity condition on \( H \)
\begin{equation}\label{fLips}
|f(t, \eta_1) - f(t, \eta_2)| \leq |\eta_1 - \eta_2|.  
\end{equation}
Since \( f(t,0)(x) = g(x,t) \), it follows that  
\begin{equation}\label{festimation}
|f(t, \eta)| \leq |\eta| + |g(t)|.    
\end{equation}

In such a Hilbert space setting, differentiation with respect to time is denoted by \( ' \). Using the notations introduced above, equations \eqref{etaeq}-\eqref{etabd} are reduced to a second-order equation in \( V^* \) of the form
\[
\eta'' + \mathcal{A}\eta = f(t, \eta), \ \
\eta(0)=u_0 \in V, \quad \eta'(0)=v_0 \in H. 
\]

The bilinear form \( a(u, v) \) is said to be coercive on \( V \times V \) if it satisfies the following positivity condition there exists a constant \( c > 0 \) such that  
\begin{equation}\label{acoercivity}
a(u, u) \geq c \| u \|^2, \quad \forall u \in V.
\end{equation}
When \( V = H_0^1(I) \), the norm \( \| u \| \) in \( V \) is equivalent to \( |u_x| \), which is induced by \( a(u, v) \). Hence, \( a(u, v) \) is coercive.  
However, when \( V = H_N^1(I) \), the norm \( \| u \| \) in \( V \) is not equivalent to \( |u_x| \), and therefore, \( a(u, v) \) is not coercive.  
As \( a(u,v) \) is not inherently coercive in general, coercivity can be ensured by adding a regularization term such as \( \epsilon u \). Therefore, without loss of generality, we can assume that \( a(u,v) \) is coercive. 

The boundedness and coercivity of \( a(u, v) \), along with the Lipschitz continuity of \( f(t, \eta) \), play a crucial role in establishing the well-posedness of the variational problem and ensuring the existence and uniqueness of solutions. 

Let us introduce the space of solutions as follows.  
\begin{equation*}\label{WOT}
W(0,T)=\{ u\ \ u\in L^2(0,T;V),\quad u' \in L^2(0,T;H),\quad u'' \in L^2(0,T;V^*)\},
\end{equation*}
where the derivatives are understood in the sense of distributions with the values in $H$ and $V^*$, see \cite{Lions1992}. Space $W(0,T)$ becomes a Hilbert space, when its inner product is set to be the sum of the inner products in the constituent spaces.

\begin{definition}\label{soldef}
Let $u_0\in V,\; v_0\in H,\; T>0$, and $f\in L^2(0,T;H)$. Function $u\in W(0,T)$ is called a \emph{weak solution} of the problem \eqref{etaeq}-\eqref{etabd}, if equation
 \begin{equation}\label{soldefeq1}
\eta'' + \mathcal{A}\eta = f(t, \eta)
\end{equation}
is satisfied in $V^*$ a.e. on $[0,T]$, and the initial conditions
 \begin{equation}\label{soldefeq2}
\eta(0)=u_0,\quad  \eta'(0)=v_0
\end{equation}
are satisfied in $V$ and $H$ correspondingly.
\end{definition}

Lemma \ref{vectortemam} is a crucial result concerning the regularity of solutions, originally stated as Lemma 2.4.1 in \cite{Temam1988}. 
It establishes continuity properties of weak solutions and provides an essential energy identity.

\begin{lemma}\label{vectortemam} 
Suppose that $u\in W(0,T)$, and
$u''+\mathcal{A}u\in L^2(0,T;H)$. Then, after a modification on a set
of measure zero, $u\in C([0,T];V)$, $u'\in C([0,T];H)$ and, in the
sense of distributions on $(0,T)$ one has
\begin{equation}
(u''+ \mathcal{A}u,u')=\frac 12 \frac {d}{dt}\{|u'|^2+\|u\|^2\}.
\end{equation}
\end{lemma}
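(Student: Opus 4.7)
The plan is to follow the classical regularization argument of Lions and Temam: we cannot simply compute $\langle u'' + \mathcal{A}u, u'\rangle$ directly, because the pairing $\langle u'', u'\rangle$ is not \emph{a priori} meaningful ($u'' \in L^2(0,T;V^*)$ while $u'$ is only in $L^2(0,T;H)$). The hypothesis $u'' + \mathcal{A}u \in L^2(0,T;H)$ is what makes the right-hand side well defined, and we must exploit it to justify the identity.

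First, I would extend $u$ to a function $\tilde u$ defined on a slightly larger interval $(-\delta, T+\delta)$, for example by reflection followed by a smooth cutoff, so that $\tilde u$ retains the regularity $\tilde u \in L^2(V)$, $\tilde u' \in L^2(H)$, $\tilde u'' + \mathcal{A}\tilde u \in L^2(H)$ on this larger interval. Then I would introduce a smooth mollifier $\rho_\epsilon$ in the time variable and set $u_\epsilon := \rho_\epsilon * \tilde u$. By construction $u_\epsilon \in C^\infty((0,T);V)$, so the energy identity
\[
(u_\epsilon'' + \mathcal{A}u_\epsilon,\, u_\epsilon') = \tfrac12 \tfrac{d}{dt}\bigl\{|u_\epsilon'|^2 + a(u_\epsilon,u_\epsilon)\bigr\}
\]
holds pointwise by elementary calculus, using the symmetry of $a(\cdot,\cdot)$.

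Next I would pass to the limit $\epsilon \to 0$. Standard properties of mollification give $u_\epsilon \to u$ in $L^2_{\mathrm{loc}}(V)$, $u_\epsilon' \to u'$ in $L^2_{\mathrm{loc}}(H)$, and, crucially, $u_\epsilon'' + \mathcal{A}u_\epsilon \to u'' + \mathcal{A}u$ in $L^2_{\mathrm{loc}}(H)$ because by linearity $u_\epsilon'' + \mathcal{A}u_\epsilon = \rho_\epsilon * (\tilde u'' + \mathcal{A}\tilde u)$ and the latter lies in $L^2(H)$. Passing to the limit in the mollified identity, tested against an arbitrary $\varphi \in C_c^\infty(0,T)$, yields the distributional identity claimed in the lemma, since both sides converge in $L^1_{\mathrm{loc}}(0,T)$.

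Finally, the continuity statements $u \in C([0,T];V)$ and $u' \in C([0,T];H)$ follow from the identity. The right-hand side $2(u'' + \mathcal{A}u, u')$ is in $L^1(0,T)$, so $t \mapsto |u'(t)|^2 + a(u(t),u(t))$ is absolutely continuous, and hence, after modification on a null set, continuous on $[0,T]$. Combining this norm continuity with the weak continuity of $u:[0,T]\to V$ and $u':[0,T]\to H$ (which follows from $u \in W(0,T)$ by the Lions--Magenes embedding $W(0,T) \hookrightarrow C([0,T];H)$, together with $u' \in L^2(H)$, $u'' \in L^2(V^*)$) upgrades weak convergence to strong convergence, by the standard Hilbert-space fact that weak convergence plus convergence of norms implies norm convergence. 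The main obstacle, as noted above, is justifying the chain rule for $|u'|^2 + a(u,u)$ despite the mismatched regularity of $u'$ and $u''$; the mollification argument is exactly the device that overcomes it, and all other steps are routine.
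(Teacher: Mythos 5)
Your proposal is correct: the paper does not prove this lemma itself but cites it as Lemma 2.4.1 of Temam's book, and the mollification-in-time argument you give (regularize, apply the pointwise energy identity, pass to the limit using $u_\epsilon''+\mathcal{A}u_\epsilon=\rho_\epsilon*(\tilde u''+\mathcal{A}\tilde u)$, then upgrade weak continuity to strong continuity via absolute continuity of the energy) is exactly the classical proof of that cited result. No gaps worth flagging.
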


\begin{lemma}\label{lemma1}
Let $g\in L^2(0,T;H)$ and let $\eta$ be a weak solution of the problem \eqref{soldefeq1}-\eqref{soldefeq2}. Then $\eta\in C([0,T];V),\; \eta'\in C([0,T];H)$, and
\begin{equation}\label{lemma1eq1}
|\eta'(t)|^2+\|\eta(t)\|^2\leq c\left(|v_0|^2+\|u_0\|^2+\int_0^t |g(s)|^2\, ds\right)
\end{equation}
for any $t\in[0,T]$. The constant $c$ is dependent only on $T$.
\end{lemma}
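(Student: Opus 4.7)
The plan is to derive the estimate by applying the energy identity from Lemma \ref{vectortemam} and closing it via Gronwall's inequality. First I would verify the hypothesis of that lemma, namely that $\eta'' + \mathcal{A}\eta \in L^2(0,T;H)$. Since $\eta$ is a weak solution, $\eta'' + \mathcal{A}\eta = f(t,\eta)$, and the bound \eqref{festimation} gives $|f(t,\eta(t))| \leq |\eta(t)| + |g(t)|$. Because $\eta \in L^2(0,T;V) \hookrightarrow L^2(0,T;H)$ by the definition of $W(0,T)$, and $g \in L^2(0,T;H)$ by assumption, we conclude $f(\cdot,\eta) \in L^2(0,T;H)$. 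Lemma \ref{vectortemam} then yields the regularity $\eta \in C([0,T];V)$, $\eta' \in C([0,T];H)$, together with the identity
$$
\frac{1}{2}\frac{d}{dt}\bigl(|\eta'(t)|^2 + \|\eta(t)\|^2\bigr) = (f(t,\eta(t)), \eta'(t))
$$
in the distributional sense on $(0,T)$.

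Next I would estimate the right-hand side by Cauchy--Schwarz and Young's inequality, $(f(t,\eta),\eta') \leq \tfrac{1}{2}|f(t,\eta)|^2 + \tfrac{1}{2}|\eta'|^2$, and combine this with $|f(t,\eta)|^2 \leq 2|\eta|^2 + 2|g(t)|^2$ from \eqref{festimation} and the continuous embedding $V\hookrightarrow H$, so that $|\eta|^2 \leq C_e\|\eta\|^2$ for a fixed embedding constant $C_e$. Setting $E(t) := |\eta'(t)|^2 + \|\eta(t)\|^2$, I would obtain the differential inequality
$$
\frac{d}{dt} E(t) \leq c_1 E(t) + 2|g(t)|^2, \qquad c_1 := \max(1,\, 2 C_e).
$$

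Applying Gronwall's inequality and integrating from $0$ to $t \in [0,T]$ produces
$$
E(t) \leq e^{c_1 T}\!\left(|v_0|^2 + \|u_0\|^2 + 2\int_0^t |g(s)|^2\, ds\right),
$$
which is precisely \eqref{lemma1eq1} with $c = 2e^{c_1 T}$ dependent only on $T$ and on the fixed structural constants of the problem.

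The main technical point is the verification that Lemma \ref{vectortemam} actually applies, which reduces to the $L^2(0,T;H)$ membership of $f(t,\eta)$ and ultimately depends on the Lipschitz estimate \eqref{fLips}. A secondary subtlety is the interpretation of $\|\cdot\|$ in the Temam identity: although that identity is naturally derived from the bilinear form $a(\cdot,\cdot)$, the coercivity hypothesis \eqref{acoercivity} (imposed without loss of generality) makes the induced energy norm equivalent to the $V$-norm, so the estimate can be recast in the form \eqref{lemma1eq1} by absorbing the equivalence constants into $c$. Once these two points are settled, the Gronwall step is entirely routine.
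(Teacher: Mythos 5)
Your proposal is correct and follows essentially the same route as the paper: verify that $f(\cdot,\eta)\in L^2(0,T;H)$ so that Lemma \ref{vectortemam} applies, use the resulting energy identity (correctly paired with $\eta'$ on the right-hand side, where the paper's displayed identity has an apparent typo writing $\eta$ instead of $\eta'$), estimate the forcing term via \eqref{festimation} and the embedding $V\hookrightarrow H$, and close with Gronwall. The only differences are cosmetic (a differential rather than integrated form of the Gronwall step, and an explicit remark on the equivalence of the $a$-induced norm with the $V$-norm, which the paper handles implicitly through coercivity).
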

\begin{proof}
Since \( f \in L^2(0,T;H) \), it follows that \( u'' + \mathcal{A}u \in L^2(0,T;H) \). By applying Lemma \ref{vectortemam}, we obtain the following energy identity:  
\begin{equation}\label{lemma1pf1}
\frac{1}{2} \frac{d}{dt} \left( |\eta'|^2 + a(\eta, \eta) \right) = (f(t,\eta), \eta).
\end{equation}
By using the estimate in (\ref{festimation}), we bound the right-hand side as  
\begin{equation}\label{lemma1pf2}
|(f(t,\eta), \eta)| \leq |\eta| |\eta'| + |g(t)| |\eta'|.
\end{equation}
Integrating (\ref{lemma1pf1}) over \( [0,t] \), we obtain  
\begin{equation}\label{lemma11pf3}
|\eta'(t)|^2 + a(\eta(t), \eta(t)) = |v_0|^2 + a(u_0, u_0) + \int_0^t (f(s,\eta(s)), \eta(s)) ds.
\end{equation}
This provides an essential estimate for the stability and boundedness of \( \eta \), which depends on the initial conditions and the external forcing term \( g(x,t) \). From \eqref{lemma11pf3}, we can conclude that \( \eta \in C([0,T];V) \) and \( \eta' \in C([0,T];H) \), ensuring the continuity of the solution in the respective function spaces.

Applying (\ref{acoercivity}) and (\ref{lemma1pf2}) to (\ref{lemma11pf3}), we obtain  
\begin{align*}
 & |\eta'(t)|^2 + c \|\eta(t)\|^2
  \leq |v_0|^2+\|u_0\|^2 + 
\int_0^t |\eta(s)| |\eta'(s)| + |g(s)| |\eta'(s)| \, ds \\
& \leq |v_0|^2+\|u_0\|^2
 + \int_0^t \|\eta(s)\|^2 +|\eta'(s)|^2 ds +\int_0^t |g(s)|^2 ds. 
\end{align*}
Here, we used the fact that \( V \) is continuously embedded in \( H \), ensuring the validity of the norm estimates.  
Applying Gronwall’s inequality, we conclude (\ref{lemma1eq1}), which establishes the boundedness of the solution \( \eta \).
\end{proof}

Lemma \ref{lemma1} implies that the solution 
$\eta$ is constrained by the initial conditions and the external force. The smaller the influence of these factors, the smaller the influence on the solution.

\begin{lemma}\label{lemma2}
Let \( g_i \in L^2(0,T;H) \), and let \( \eta_i \) (\( i=1,2 \)) be the solutions of the problem:
\begin{align*}
& \eta_i'' + \mathcal{A} \eta_i = f_i(t, \eta), \quad
  \eta_i(0) = u_{0,i} \in V, \quad \eta_i'(0) = v_{0,i} \in H.
\end{align*}
Then, for any \( t \in [0,T] \), the following inequality holds:
\begin{multline}\label{lemma2eq1}
|\eta_2'(t) - \eta_1'(t)|^2 + \| \eta_2(t) - \eta_1(t) \|^2 \\
\leq c \big( |v_{0,2} - v_{0,1}|^2 + \|u_{0,2} - u_{0,1}\|^2 
+ \|g_2 - g_1\|^2_{L^2(0,T;H)} \big).
\end{multline}

Furthermore, the solution of the problem \eqref{soldefeq1}--\eqref{soldefeq2} is unique.
\end{lemma}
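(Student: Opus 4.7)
The plan is to follow the same energy-method template used in the proof of Lemma \ref{lemma1}, but applied to the difference of the two solutions. Set $w=\eta_2-\eta_1$, $w_0=u_{0,2}-u_{0,1}$, $w_1=v_{0,2}-v_{0,1}$, and $G=g_2-g_1$. Subtracting the two equations gives
\[
w''+\mathcal{A}w = f_2(t,\eta_2)-f_1(t,\eta_1),\qquad w(0)=w_0,\ w'(0)=w_1.
\]
Because $f_i(t,\eta)=\epsilon^{-1}(\sin\phi-\sin(\phi+\epsilon\eta))+g_i(t)$, the right-hand side splits as $[f_2(t,\eta_2)-f_2(t,\eta_1)]+G(t)$, and the Lipschitz estimate \eqref{fLips} gives $|f_2(t,\eta_2)-f_2(t,\eta_1)|\le |w|$. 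In particular the right-hand side lies in $L^2(0,T;H)$, so the hypotheses of Lemma \ref{vectortemam} are satisfied for $w$.

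Next I would apply Lemma \ref{vectortemam} to $w$ to obtain $w\in C([0,T];V)$, $w'\in C([0,T];H)$, and the energy identity
\[
\frac12\frac{d}{dt}\bigl(|w'(t)|^2+a(w(t),w(t))\bigr)=\bigl(f_2(t,\eta_2)-f_1(t,\eta_1),w'(t)\bigr).
\]
Bounding the right-hand side by $(|w|+|G(t)|)\,|w'|$, integrating on $[0,t]$, and invoking coercivity \eqref{acoercivity} yields
\[
|w'(t)|^2+c\|w(t)\|^2 \le |w_1|^2+\|w_0\|^2+\int_0^t\bigl(|w(s)|\,|w'(s)|+|G(s)|\,|w'(s)|\bigr)ds.
\]
Using Young's inequality $2ab\le a^2+b^2$ together with the continuous embedding $V\hookrightarrow H$ to dominate $|w|$ by $\|w\|$, I would rewrite the integral in the form $\int_0^t(\|w(s)\|^2+|w'(s)|^2)\,ds+\|G\|_{L^2(0,T;H)}^2$, which puts the estimate in a form amenable to Gronwall's inequality.

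Finally, applying Gronwall to the quantity $|w'(t)|^2+\|w(t)\|^2$ produces the bound \eqref{lemma2eq1} with a constant depending only on $T$ and the coercivity constant $c$. Uniqueness for the problem \eqref{soldefeq1}--\eqref{soldefeq2} then follows immediately by specializing to $g_1=g_2$, $u_{0,1}=u_{0,2}$, $v_{0,1}=v_{0,2}$: the right-hand side of \eqref{lemma2eq1} vanishes, forcing $\eta_2\equiv\eta_1$ in $C([0,T];V)$.

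I do not anticipate a serious obstacle, since the structure mirrors Lemma \ref{lemma1} and the Lipschitz bound \eqref{fLips} is exactly what makes the difference equation tractable. The only point requiring a little care is checking that $w''+\mathcal{A}w\in L^2(0,T;H)$ so that Lemma \ref{vectortemam} legitimately applies; this follows because each $f_i(t,\eta_i)$ is in $L^2(0,T;H)$ by Lemma \ref{lemma1} (hence so is their difference), after which the proof is an essentially routine Gronwall argument.
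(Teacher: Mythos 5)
Your proposal is correct and follows essentially the same route as the paper: form the difference $z=\eta_2-\eta_1$, use the Lipschitz bound \eqref{fLips} to estimate $|f_2(t,\eta_2)-f_1(t,\eta_1)|\le |z|+|g_2-g_1|$, pair with $z'$ via the energy identity of Lemma \ref{vectortemam}, and close with Young's inequality and Gronwall, with uniqueness following by specializing to identical data. You have merely written out in full the steps the paper compresses into ``estimating each inner product as in Lemma \ref{lemma1}.''
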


\begin{proof}
Define \( z = \eta_2 - \eta_1 \), then \( z \) satisfies the equation
\[
z'' + \mathcal{A}z = f_2(t, \eta_2) - f_1(t, \eta_1).
\]
The difference \( f_2(t, \eta_2) - f_1(t, \eta_1) \) can be estimated as follows.
\begin{align*}
 & |f_2(t,\eta_2)-f_1(t,\eta_1)| \le \epsilon^{-1} |\sin (\phi(t)+\epsilon \eta_2(t)) - \sin (\phi(t)+\epsilon \eta_1(t))| + |g_2(t)-g_1(t)| \\
& \leq |\eta_2(t)-\eta_1(t)| + |g_2(t)-g_1(t)| = |z| + |g_2(t)-g_1(t)|.
\end{align*}

Multiplying the equation by \( z' \) and estimating each inner product as in Lemma \ref{lemma1}, we obtain \eqref{lemma2eq1}. 

The uniqueness of the solution follows directly from \eqref{lemma2eq1}, as it implies that any two solutions with the same initial conditions and external force must be identical.
\end{proof}

\begin{theorem}\label{Thm1}
Assume that \( g \in L^2(0,T;H) \). Then, there exists a unique weak solution \( \eta \in W(0,T) \) to the problem \eqref{soldefeq1}-\eqref{soldefeq2}. Furthermore, the solution satisfies the following regularity properties
\[
\eta \in C([0,T];V), \quad \eta' \in C([0,T];H), \quad \eta'' \in L^2(0,T;V^*).
\]
\end{theorem}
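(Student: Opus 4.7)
The plan is to establish existence by the Faedo--Galerkin construction, relying on the a priori bound already proved in Lemma \ref{lemma1}, and then to read off uniqueness and continuity from Lemmas \ref{lemma2} and \ref{vectortemam}. Concretely, I would fix a Hilbert basis $\{w_j\}_{j\ge 1}$ of $V$ that is orthonormal in $H$, set $V_m=\mathrm{span}\{w_1,\dots,w_m\}$, and seek approximations $\eta_m(t)=\sum_{j=1}^m g_{jm}(t)\, w_j$ satisfying the projected equation
\[
(\eta_m''(t),w_j)+a(\eta_m(t),w_j)=(f(t,\eta_m(t)),w_j),\qquad j=1,\dots,m,
\]
together with $\eta_m(0)=P_m u_0$ and $\eta_m'(0)=P_m v_0$, where $P_m$ is the $H$-orthogonal projection onto $V_m$. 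Since $f(t,\cdot)$ is globally Lipschitz on $H$ by \eqref{fLips}, the resulting system of second-order ODEs for the coefficients $g_{jm}$ has a unique solution on $[0,T]$ by Picard--Lindel\"of.

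Testing the projected equation against $\eta_m'$ and repeating the computation of Lemma \ref{lemma1} verbatim (coercivity, the growth estimate \eqref{festimation}, and Gronwall's inequality) yields an $m$-independent bound
\[
|\eta_m'(t)|^2+\|\eta_m(t)\|^2\le c\Big(|v_0|^2+\|u_0\|^2+\int_0^T|g(s)|^2\,ds\Big),
\]
so that $\{\eta_m\}$ is bounded in $L^\infty(0,T;V)$, $\{\eta_m'\}$ in $L^\infty(0,T;H)$, $\{f(\cdot,\eta_m)\}$ in $L^2(0,T;H)$, and hence $\{\eta_m''\}$ in $L^2(0,T;V^*)$ directly from the equation. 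The step I expect to be the main obstacle is passing to the limit in the nonlinear term, since weak convergence alone is insufficient. I would extract subsequences converging weakly-$\ast$ in the spaces just listed, and then invoke an Aubin--Lions type compactness argument --- exploiting that $V$ embeds compactly into $H$ on bounded $I$, with a local truncation in $x$ if $L=\infty$ --- to upgrade the convergence to $\eta_m\to\eta$ strongly in $C([0,T];H)$. The Lipschitz bound \eqref{fLips} then yields $f(\cdot,\eta_m)\to f(\cdot,\eta)$ strongly in $L^2(0,T;H)$, after which integrating the Galerkin equation against test functions of the form $\varphi(t)\,w_j$ with $\varphi\in C_c^\infty(0,T)$ identifies $\eta$ as a weak solution in the sense of Definition \ref{soldef}; the initial data are recovered from $\eta_m(0)=P_m u_0\to u_0$ in $V$ and from the continuity of $\eta'$ into $H$.

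It remains to collect the regularity and uniqueness. Since $\eta''+\mathcal{A}\eta=f(\cdot,\eta)\in L^2(0,T;H)$, Lemma \ref{vectortemam} gives (after modification on a null set) $\eta\in C([0,T];V)$ and $\eta'\in C([0,T];H)$; the inclusion $\eta''\in L^2(0,T;V^*)$ is immediate from the equation, because $\mathcal{A}\in\mathcal{L}(V,V^*)$ forces $\mathcal{A}\eta\in L^2(0,T;V^*)$ and $f(\cdot,\eta)\in L^2(0,T;H)\hookrightarrow L^2(0,T;V^*)$. Finally, uniqueness is exactly the last assertion of Lemma \ref{lemma2}, applied with $g_1=g_2=g$ and identical initial data.
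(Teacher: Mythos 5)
Your proposal is correct and follows essentially the same route as the paper: the paper's proof is only a sketch (a Faedo--Galerkin construction with a basis orthonormal in $H$, uniform bounds from Lemma \ref{lemma1}, weak compactness, and passage to the limit in the nonlinear term via the compact embedding of $V$ into $H$, citing \cite{Ha-Na2}), and you have filled in precisely those steps, including the uniqueness via Lemma \ref{lemma2} and the regularity via Lemma \ref{vectortemam}. Your aside about needing a local truncation when $L=\infty$ is a legitimate refinement the paper does not address.
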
  

\begin{proof}
The theorem can be proved by modifying the proof in \cite{Ha-Na2}. The proof proceeds by first constructing an approximate solution as a finite linear combination of basis functions in \( V \), which also form an orthonormal basis in \( H \) with respect to the operator \( \mathcal{A} \). Then, using Lemma \ref{lemma1}, we establish the uniform boundedness of these approximate solutions. Next, by utilizing the weak compactness property of the space, we extract a weakly convergent subsequence. Finally, we take the limit of this sequence to obtain a weak solution, ensuring the convergence of the nonlinear term by leveraging the compact embedding of \( V \) into \( H \). This approach guarantees the existence of a weak solution satisfying the required regularity conditions.
\end{proof}

In the case where \( g \in L^2(0,T;V^*) \), we consider \( W_r(0,T) \), the Hilbert space of more regular weak solutions, defined as  
\[
W_r(0,T) = \left\{ u \mid u \in L^2(0,T;V), \, u' \in L^2(0,T;V), \, u'' \in L^2(0,T;V^*) \right\}.
\]

It is well known that \( g = \delta(x - x_0) \in L^2(0,T;V^*) \). Let us consider a nonlinear function as in \cite{SALERNO1983}  
\[
g(x,t, \eta(x,t)) = \delta(x - x_0) \sin \eta(x,t).
\]
Assuming that the solution \( \eta \in W_r(0,T) \subset L^2(0,T;V) \), the term involving the delta function,  
\[
\int_{-L}^L \delta(x - x_0) \sin \eta(x,t) v(x) \,dx = \sin \eta(x_0,t) v(x_0),
\]
is well-defined since \( \eta(t), v \in V \subset C([-L,L]) \), where the continuous embedding ensures pointwise continuity.

Accordingly, for \( v, w \in V \), define the functional \( \title{g} \) by  
\[
\langle \title{g}(w), v \rangle = \sin w(x_0) v(x_0), \quad x_0 \in [-L,L].
\]
Using the norm equivalence from the embedding \( V \subset C([-L,L]) \), we obtain 
\[
|\langle \title{g}(w), v \rangle| \leq \max_{x \in [-L,L]} |w(x)| \max_{x \in [-L,L]} |v(x)| \leq c \|w\| \|v\|,
\]
which confirms that \( \title{g} : V \to V^* \) is a well-defined operator. Hence, \( g(\eta) \in  L^2(0,T;V^*) \). We can easily prove that \( g(\eta) \) is Lipschitz continuous. For \( \eta_1, \eta_2 \in V \), we have  
\[
\langle g(\eta_1) - g(\eta_2), v \rangle = (\sin \eta_1(x_0) - \sin \eta_2(x_0)) v(x_0).
\]
Using the Lipschitz property of the sine function,  
\[
|\sin \eta_1(x_0) - \sin \eta_2(x_0)| \leq |\eta_1(x_0) - \eta_2(x_0)|,
\]
it follows that  
\[
|\langle g(\eta_1) - g(\eta_2), v \rangle| \leq |\eta_1(x_0) - \eta_2(x_0)| |v(x_0)|.
\]
Applying the embedding \( V \subset C([-L,L]) \),  
\[
|\langle g(\eta_1) - g(\eta_2), v \rangle| \leq c \|\eta_1 - \eta_2\| \|v\|,
\]
which confirms that \( g(\eta) \) is Lipschitz continuous in \( V^* \).

In this case, where \( g(\eta) \in  L^2(0,T;V^*) \), we can guarantee the well-posedness of solutions by introducing an additional perturbation term \( u_{xxt} \), as discussed in \cite{GH2011}.  

Lemma \ref{settemammodified} plays a similar role to Lemma \ref{vectortemam}, providing essential regularity and energy estimates for the solution. Specifically, it ensures the boundedness of the perturbation function \( \eta \) and establishes continuity properties in appropriate function spaces. The proof of this result is detailed in \cite{GH2011} for the case where \( \epsilon = 1 \) and \( \phi = 0 \).

\begin{lemma}\label{settemammodified}
Let $u\in W_r(0,T)$. Then, after a modification on the set of measure zero, 
$u\in C([0,T];V)$, $u'\in C([0,T];H)$ and, in the sense of distributions on $(0,T)$ one has
\begin{equation}\label{setwwp1}
\frac {d}{dt}||u||^2=2((u',u))=2\langle  \mathcal{A}u,u'\rangle,\quad \text{and}\quad \frac {d}{dt}|u'|^2=2\langle u'',u'\rangle.
\end{equation}
\end{lemma}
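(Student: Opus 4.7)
The plan is to lift the Lions--Temam argument that underlies Lemma \ref{vectortemam} one derivative upward, exploiting the fact that in $W_r(0,T)$ both $u$ and $u'$ belong to $L^2(0,T;V)$. The continuity $u\in C([0,T];V)$ would follow immediately by observing that $u$ lies in the Bochner Sobolev space $H^1(0,T;V)$ (since $u,u'\in L^2(0,T;V)$), which embeds continuously into $C([0,T];V)$. The companion continuity $u'\in C([0,T];H)$ together with the identity $\frac{d}{dt}|u'|^2 = 2\langle u'',u'\rangle$ would be obtained by invoking Lemma \ref{vectortemam} itself, applied to the pair $(u',u'')$ in the Gelfand triple $V\hookrightarrow H\hookrightarrow V^*$: since $u'\in L^2(0,T;V)$ and $u''\in L^2(0,T;V^*)$, the earlier lemma (or equivalently its Lions--Magenes predecessor) yields the claim with $u'$ in the role previously played by $u$.

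For the remaining chain-rule identity $\frac{d}{dt}\|u\|^2 = 2((u',u))$, I would first extend $u$ by reflection outside $[0,T]$ to obtain a function in $H^1_{\mathrm{loc}}(\mathbb{R};V)$, and then regularize in time by a standard mollifier $u_\varepsilon=\rho_\varepsilon*u$. For the smooth approximants the classical product rule gives $\frac{d}{dt}\|u_\varepsilon\|^2 = 2((u_\varepsilon',u_\varepsilon))$. Since $u_\varepsilon\to u$ in $L^2_{\mathrm{loc}}(\mathbb{R};V)$ and $u_\varepsilon'\to u'$ in $L^2_{\mathrm{loc}}(\mathbb{R};V)$, both sides converge in $L^1_{\mathrm{loc}}(0,T)$, which yields the stated identity in the sense of distributions on $(0,T)$. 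The final equality $((u',u))=\langle\mathcal{A}u,u'\rangle$ is then nothing more than the defining relation $a(u,v)=\langle\mathcal{A}u,v\rangle$ combined with the symmetry of $a$, using the convention (justified in the coercivity discussion immediately preceding the lemma) that the inner product $((\cdot,\cdot))$ on $V$ is the one induced by $a(\cdot,\cdot)$.

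The main obstacle I anticipate is not conceptual but technical: the mollification step has to be arranged so that the limiting identities are genuinely independent of the chosen extension beyond $[0,T]$, and so that the endpoint values $u(0),u(T)\in V$ and $u'(0),u'(T)\in H$ are recovered as the pointwise traces of the continuous representatives. This is exactly the delicate point handled in Lemma 2.4.1 of \cite{Temam1988} and its higher-regularity refinement in \cite{GH2011}, and once a reflection extension is adopted the verification reduces to standard bookkeeping with Bochner integrals.
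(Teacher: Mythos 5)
Your argument is correct, and it is worth noting that the paper itself offers no proof of Lemma \ref{settemammodified} at all: it simply defers to \cite{GH2011} (for the case $\epsilon=1$, $\phi=0$). Your reconstruction is the standard one and it works: $u,u'\in L^2(0,T;V)$ gives $u\in H^1(0,T;V)\hookrightarrow C([0,T];V)$ by the fundamental theorem of calculus for Bochner integrals, and the pair $(u',u'')\in L^2(0,T;V)\times L^2(0,T;V^*)$ sits exactly in the hypotheses of the Lions--Magenes interpolation lemma for the Gelfand triple $V\hookrightarrow H\hookrightarrow V^*$, yielding $u'\in C([0,T];H)$ and $\frac{d}{dt}|u'|^2=2\langle u'',u'\rangle$; the mollification argument for $\frac{d}{dt}\|u\|^2=2((u',u))$ is routine for $H^1(0,T;V)$ functions. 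Two small points of precision. First, you cannot literally invoke Lemma \ref{vectortemam} for the pair $(u',u'')$, since that lemma's hypothesis is $u\in W(0,T)$ with $u''+\mathcal{A}u\in L^2(0,T;H)$ and its conclusion is an identity involving $\mathcal{A}$; what you actually need, and what you correctly fall back on, is its Lions--Magenes predecessor ($v\in L^2(0,T;V)$, $v'\in L^2(0,T;V^*)$ $\Rightarrow$ $v\in C([0,T];H)$ and $\frac{d}{dt}|v|^2=2\langle v',v\rangle$), so the citation should be to that result (e.g.\ \cite{Lions1992} or \cite{Temam1988}) rather than to the lemma as stated. Second, the terminal equality $((u',u))=\langle\mathcal{A}u,u'\rangle$ holds exactly only under the convention that $((\cdot,\cdot))=a(\cdot,\cdot)$; the paper's own discussion only asserts that $\|\cdot\|$ is \emph{equivalent} to the norm induced by $a$ (and for $V=H^1_N(I)$ the form $a$ is not even coercive without a regularizing term), so you are right to flag this as a convention rather than a theorem --- this is a looseness in the paper's statement, not a gap in your proof. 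Compared with the paper's bare citation, your route has the advantage of being self-contained and of making transparent exactly which structural facts ($u'\in L^2(0,T;V)$ rather than merely $L^2(0,T;H)$) upgrade the conclusion from Lemma \ref{vectortemam} to Lemma \ref{settemammodified}.
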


\begin{theorem}\label{Thm2}
Let \( \lambda \) be a positive real number, and assume that \( f(t,\eta) \) is Lipschitz continuous from \( L^2(0,T;V) \) into \( L^2(0,T;V^*) \). Then, there exists a unique weak solution \( \eta \) satisfying  
\begin{align}
  &       \eta'' + \lambda \mathcal{A}\eta' + \mathcal{A}\eta = f(t, \eta), \label{thm2eq1} \\
  &  \eta(0) = u_0 \in V, \quad \eta'(0) = v_0 \in H. 
  \label{thm2eq2}
\end{align}
The solution satisfies the regularity properties
\[
\eta \in C([0,T];V), \quad \eta' \in C([0,T];H), \quad \eta'' \in L^2(0,T;V^*).
\]  
Moreover, for all \( t \in [0,T] \), the following energy estimate holds
\begin{equation} \label{thm2energy}
|\eta'(t)|^2 + \| \eta(t) \|^2  + \int_0^t \| \eta'(s) \|^2 \,ds
\leq C \left( \| u_0 \|^2 + |v_0|^2  + \|g\|_{L^2(0,T;V^*)}^2 \right),
\end{equation}  
where \( C \) is a constant depending on \( T \) and \( \lambda \).
\end{theorem}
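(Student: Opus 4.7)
The plan is to combine a Galerkin construction for the auxiliary linear damped wave equation with a contraction-mapping argument that exploits the Lipschitz continuity of $f$ from $L^2(0,T;V)$ into $L^2(0,T;V^*)$. The crucial new ingredient compared with Theorem \ref{Thm1} is the strong damping term $\lambda\mathcal{A}\eta'$: testing with $\eta'$ now produces an $L^2(0,T;V)$-control on $\eta'$, which is precisely what is needed to absorb a forcing valued only in $V^*$.

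First I would treat the linear subproblem: for a fixed $h\in L^2(0,T;V^*)$, construct the solution of $\eta''+\lambda\mathcal{A}\eta'+\mathcal{A}\eta=h$ with data $u_0,v_0$ via a Galerkin scheme, using a basis of $\mathcal{A}$-eigenfunctions of $V$ that is orthonormal in $H$. Testing the approximate equation with $\eta_n'$ and invoking Lemma \ref{settemammodified} yields
\[
\tfrac{1}{2}\tfrac{d}{dt}\bigl(|\eta_n'|^2+\|\eta_n\|^2\bigr)+\lambda\|\eta_n'\|^2=\langle h,\eta_n'\rangle\leq \tfrac{\lambda}{2}\|\eta_n'\|^2+\tfrac{1}{2\lambda}\|h\|_{V^*}^2.
\]
Integrating and applying Gronwall gives the $n$-uniform bound
\[
|\eta_n'(t)|^2+\|\eta_n(t)\|^2+\int_0^t\|\eta_n'(s)\|^2\,ds\leq C\bigl(\|u_0\|^2+|v_0|^2+\|h\|_{L^2(0,T;V^*)}^2\bigr),
\]
with $\eta_n''$ consequently bounded in $L^2(0,T;V^*)$ through the equation itself. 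Standard weak-compactness arguments then produce a limit $\eta\in W_r(0,T)$ solving the linear equation in $V^*$, while the continuity statements follow from Lemma \ref{settemammodified}. Uniqueness for the linear problem follows from the same energy identity applied to the difference of two solutions.

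Next I would set up the fixed-point map $\Lambda:L^2(0,T;V)\to L^2(0,T;V)$ sending $\zeta$ to the solution of the linear problem with $h=f(\cdot,\zeta)$. The linear estimate combined with the Lipschitz hypothesis on $f$ gives, for $z=\Lambda(\zeta_1)-\Lambda(\zeta_2)$ (which has zero data),
\[
\sup_{t\in[0,T']}\|z(t)\|^2 \leq C(T')\,\mathrm{Lip}(f)^2\,\|\zeta_1-\zeta_2\|_{L^2(0,T';V)}^2,
\]
so $\|z\|_{L^2(0,T';V)}\leq \bigl(C(T')\,\mathrm{Lip}(f)^2\,T'\bigr)^{1/2}\|\zeta_1-\zeta_2\|_{L^2(0,T';V)}$. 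Choosing $T'$ sufficiently small renders $\Lambda$ a contraction; Banach's fixed-point theorem then produces a unique weak solution on $[0,T']$, and the uniform linear energy estimate allows finitely many restarts to reach $T$. The full bound \eqref{thm2energy} is obtained by applying the linear estimate to the fixed point with $h=f(\cdot,\eta)$, using $\|f(\cdot,\eta)\|_{L^2(0,T;V^*)}\leq \mathrm{Lip}(f)\,\|\eta\|_{L^2(0,T;V)}+\|f(\cdot,0)\|_{L^2(0,T;V^*)}$ and a Gronwall absorption.

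The main obstacle is the nonlinear limit passage that would appear in a direct Galerkin treatment of the nonlinear equation: because $f$ is only known to be Lipschitz as a map between $L^2(0,T;V)$ and $L^2(0,T;V^*)$, weak convergence of $\eta_n$ in $L^2(0,T;V)$ does not suffice to identify the weak limit of $f(\cdot,\eta_n)$, and compactness tools such as Aubin--Lions yield strong convergence only into $L^2(0,T;H)$. Factoring the problem through the linear subproblem and a Picard iteration circumvents this; the remaining technical point is to check that the contraction horizon $T'$ and the constants in the linear energy estimate stay controlled under iteration, which holds because $\mathrm{Lip}(f)$ is a fixed constant and the linear estimate depends only on $T$ and $\lambda$.
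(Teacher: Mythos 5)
Your proposal is correct in substance, and it takes a genuinely different (and considerably more explicit) route than the paper, whose entire proof of Theorem \ref{Thm2} is the remark that it ``can be carried out in a manner similar to the arguments presented in the preceding lemmas and theorems,'' i.e.\ a direct Galerkin construction on the nonlinear equation in the spirit of Theorem \ref{Thm1} and \cite{GH2011}. You instead factor the problem through the linear strongly damped equation and close with a Picard/contraction argument. Your diagnosis of why this detour is worthwhile is exactly right: with $f$ known only as a Lipschitz map $L^2(0,T;V)\to L^2(0,T;V^*)$, the weak limit of $f(\cdot,\eta_n)$ along a Galerkin subsequence cannot be identified from weak convergence of $\eta_n$ in $L^2(0,T;V)$ plus Aubin--Lions compactness into $L^2(0,T;H)$; the paper's direct approach really relies on the extra pointwise structure of its concrete nonlinearity (a Nemytskii map plus a point source handled via $V\subset C([-L,L])$). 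Your linear energy identity with the $\lambda\|\eta_n'\|^2$ term, the absorption of $\langle h,\eta_n'\rangle$ by Young's inequality, the derivation of \eqref{thm2energy} via $\|f(\cdot,\eta)\|_{V^*}\le \mathrm{Lip}(f)\|\eta\|_V+\|f(\cdot,0)\|_{V^*}$ and Gronwall, and the recovery of the continuity statements from Lemma \ref{settemammodified} are all sound. What the paper's (implicit) approach buys is a single construction; what yours buys is a proof that actually works at the stated level of generality.

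One point you should make explicit: both the small-time contraction and the restarting step use the Lipschitz bound on subintervals, i.e.\ $\|f(\cdot,\zeta_1)-f(\cdot,\zeta_2)\|_{L^2(0,T';V^*)}\le \mathrm{Lip}(f)\,\|\zeta_1-\zeta_2\|_{L^2(0,T';V)}$ for $T'\le T$, and the restart further needs $f(\cdot,\zeta)|_{[0,T']}$ to depend only on $\zeta|_{[0,T']}$. Neither follows from the bare hypothesis that $f$ is Lipschitz between the global spaces $L^2(0,T;V)$ and $L^2(0,T;V^*)$; you need a causality (Volterra) assumption, or equivalently you can run the contraction on all of $[0,T]$ in an exponentially weighted norm $\sup_t e^{-kt}\|\cdot\|$ with $k$ large, which again requires causality. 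For the concrete $f$ of the paper (a pointwise-in-time superposition operator plus a fixed forcing $g$) this is automatic, so the gap is one of hypothesis bookkeeping rather than of substance, but in an abstract statement it should be recorded.
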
 

\begin{proof}
The proof can be carried out in a manner similar to the arguments presented in the preceding lemmas and theorems.
\end{proof}

Now, we show that choosing more regular initial conditions in (\ref{thm2eq2}) implies more regular weak solutions.
The operator \( \mathcal{A} : V \to V^* \) was defined by  
\[
\langle \mathcal{A}u, v \rangle = \int_I  u_x  v_x \, dx, \quad u,v\in V.
\]
Define the domain of \( \mathcal{A}\) as  
\(
D(\mathcal{A}) = \{ v \in V \mid | \mathcal{A}v| < \infty \}.
\)  
The norm of \( v \in D(\mathcal{A}) \) is given by \( |\mathcal{A}v| \), making \( D(\mathcal{A}) \) a Hilbert space. Moreover, \( \mathcal{A} : D(\mathcal{A}) \to H \) is an isometry, as established in \cite{Temam1988}.  
Since \( D(\mathcal{A}) \) is densely and compactly embedded in \( V \), we can consider the Gelfand triple  
\(
D(\mathcal{A}) \subset V \subset H,
\)
where \( V \) is identified with its dual \( V^* \), and \( [D(\mathcal{A})]' \) is identified with \( H \). Within this framework, the arguments of this section lead to the following theorem.  

\begin{theorem} \label{thm3} 
Let \( \eta \) be the weak solution of the equation  
\begin{equation*}
\eta'' + \mathcal{A}\eta = f(t, \eta), 
\ \ \eta(0) = u_0 \in V, \quad \eta'(0) = v_0 \in H.  
\end{equation*}  
If the initial data and and the external force satisfy  
\begin{equation*}
u_0 \in D(\mathcal{A}), \quad v_0 \in V, \quad f(t, \eta) \in L^2(0,T;V),  
\end{equation*}  
then the solution satisfies  
\begin{equation*}
\eta \in C([0,T];D(\mathcal{A})), \quad \eta' \in C([0,T];V), \quad \eta'' \in L^2(0,T;H).  
\end{equation*}  
\end{theorem}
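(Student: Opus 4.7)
The plan is to exploit the Gelfand triple $D(\mathcal{A}) \subset V \subset H$ (identifying $H$ with $[D(\mathcal{A})]'$) and to replay, one regularity level higher, the Galerkin and energy-estimate arguments used in Theorems~\ref{Thm1} and \ref{Thm2}. Since Theorem~\ref{Thm1} already supplies the unique weak solution $\eta \in W(0,T)$, I would first freeze the nonlinearity by setting $h(t) := f(t,\eta(t))$, which by hypothesis belongs to $L^2(0,T;V)$, and then prove the stronger regularity for the \emph{linear} problem $\zeta'' + \mathcal{A}\zeta = h$ with $\zeta(0) = u_0 \in D(\mathcal{A})$ and $\zeta'(0) = v_0 \in V$. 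Uniqueness for this linear problem follows from Lemma~\ref{lemma2} applied to the difference of two candidate solutions, so the regular $\zeta$ we construct must coincide with $\eta$.

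The construction uses a Galerkin scheme in the eigenbasis of $\mathcal{A}$. Since $\mathcal{A}$ is self-adjoint, positive, and has compact inverse (via the compact embedding $V \subset H$), it admits an orthonormal basis $\{e_j\} \subset D(\mathcal{A})$ of $H$ consisting of eigenvectors, simultaneously orthogonal in $V$ and in $D(\mathcal{A})$. Writing $\zeta_n(t) = \sum_{j=1}^n c_j(t)e_j$ for the Galerkin approximation, the pivotal step is to test the projected equation with $\mathcal{A}\zeta_n'$ rather than with $\zeta_n'$. Self-adjointness of $\mathcal{A}$ on the finite-dimensional subspace yields
\[
\tfrac{1}{2}\tfrac{d}{dt}\bigl(\|\zeta_n'\|^2 + |\mathcal{A}\zeta_n|^2\bigr) = (h,\mathcal{A}\zeta_n') = a(h,\zeta_n') \leq \|h\|_V\,\|\zeta_n'\|_V,
\]
so that after integration and Gronwall one obtains uniform bounds for $\|\zeta_n'\|_{L^\infty(0,T;V)}$ and $|\mathcal{A}\zeta_n|_{L^\infty(0,T;H)}$ controlled by $|\mathcal{A}u_0|$, $\|v_0\|$, and $\|h\|_{L^2(0,T;V)}$. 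The equation itself then gives $\zeta_n'' = h - \mathcal{A}\zeta_n$ bounded in $L^2(0,T;H)$.

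Weak-$*$ compactness yields a subsequence with a limit $\zeta$ enjoying $\zeta \in L^\infty(0,T;D(\mathcal{A}))$, $\zeta' \in L^\infty(0,T;V)$, and $\zeta'' \in L^2(0,T;H)$, satisfying the linear equation in $H$ almost everywhere on $[0,T]$. To upgrade the a.e.\ statements to $\zeta \in C([0,T];D(\mathcal{A}))$ and $\zeta' \in C([0,T];V)$, I would invoke the analogue of Lemma~\ref{vectortemam} for the Gelfand triple $D(\mathcal{A}) \subset V \subset H$; this analogue is a direct transcription of the original lemma using the fact that $\mathcal{A}:D(\mathcal{A}) \to H$ is an isometry. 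Identifying $\zeta$ with $\eta$ by uniqueness then completes the plan.

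The step I anticipate being the main obstacle is this lifted Temam-type lemma: the formal identity $(\zeta'',\mathcal{A}\zeta') = \tfrac{1}{2}\tfrac{d}{dt}\|\zeta'\|^2$ must be justified distributionally once $\zeta''$ is only known to lie in $L^2(0,T;H)$ and $\mathcal{A}\zeta'$ in $L^2(0,T;V^*)$. Once this lemma is established, the remaining manipulations are straightforward adaptations of those used in the proofs of Lemma~\ref{lemma1} and Theorem~\ref{Thm1}, performed one regularity level higher.
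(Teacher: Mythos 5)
Your proposal is correct and follows essentially the same route the paper intends: the paper offers no detailed proof of Theorem~\ref{thm3}, merely asserting that the arguments of Section~2 carry over within the shifted Gelfand triple \( D(\mathcal{A}) \subset V \subset H \), and your plan — freeze the nonlinearity as \( h = f(t,\eta) \in L^2(0,T;V) \), run the Galerkin/energy argument one level higher by testing with \( \mathcal{A}\zeta_n' \), and close the continuity claims with the transcription of Lemma~\ref{vectortemam} to the new triple (where the pivot is \(V\), the pairing is \( \langle f, v\rangle = (f,\mathcal{A}v) \), and \( \zeta'' + \mathcal{A}\zeta = h \) lands in the new pivot space) — is a faithful and correct elaboration of exactly that.
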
  

This theorem states that if the initial conditions and external forcing have additional smoothness, then the solution exhibits higher regularity, ensuring continuity and differentiability properties in stronger function spaces. 

\section{Numerical solutions}

In this section, we set \( x_0 = 0 \) and consider the soliton solution given by
\[
\phi(x,t) = 4 \tan^{-1} \left( e^{\gamma (x - v t)} \right), \quad \gamma = \frac{1}{\sqrt{1-v^2}}.
\]

Let \( u \) be the solution to the perturbed sine-Gordon equation
\begin{align*}
 & u_{tt} - u_{xx} + \sin u = \epsilon\left(A\cos(n\pi x/L)+ B\cos(n\pi t/T)\right), \\
 & u_x(-L,t) = u_x(L,t) = 0, \\
 & u(x,0) = \phi(x,0), \quad u_t(x,0) = \phi_t(x,0).
\end{align*}

Let \( \eta \) be the perturbed solution of 
\begin{align*}
  & \eta_{tt} - \eta_{xx} + \epsilon^{-1}(\sin(\phi+\epsilon \eta) - \sin \phi) = A\cos(n\pi x/L)+ B\cos(n\pi t/T), \\
  & \eta(x,0) = \eta_t(x,0) = 0, \quad
  \eta_x(-L,t) = \eta_x(L,t) = 0. 
\end{align*}

We fix the spatial domain size as \( L = 13 \), the time horizon as \( T = 20 \), and set the forcing parameters as \( A = 1 \), \( B = 2 \), and \( n = 4 \).

Since \( \phi(x,t) \) is a soliton solution satisfying the boundary conditions \( \phi(-\infty,t) = 0 \) and \( \phi(\infty,t) = 0 \), the use of finite boundary conditions \( u_x(-L,t) = u_x(L,t) = 0 \) introduces negligible error. The soliton’s rapid spatial decay ensures these boundaries effectively approximate the behavior at infinity.

Fig. \ref{fig1} presents the solution \( u(x,t) \) of the original sine-Gordon equation and the perturbed variant, evaluated at \( t = 2, 5, 8, 10, 15 \), along with a corresponding colormap plotted over the time interval \([0,20]\).  

Fig. \ref{fig2} shows the solution of the perturbed nonlinear equation,  
\[
u(x,t) = \phi(x,t) + \epsilon \eta(x,t),
\]
with \( \epsilon = 0.05 \), at the same time points.

The two results closely match the solutions \( u(x,t) \) of both the original and perturbed sine-Gordon equations, demonstrating that the perturbation model accurately captures the system’s behavior over time. 

\begin{figure}[H]
    \centering
    \begin{minipage}{0.49\textwidth}
        \centering
        \includegraphics[width=\textwidth]{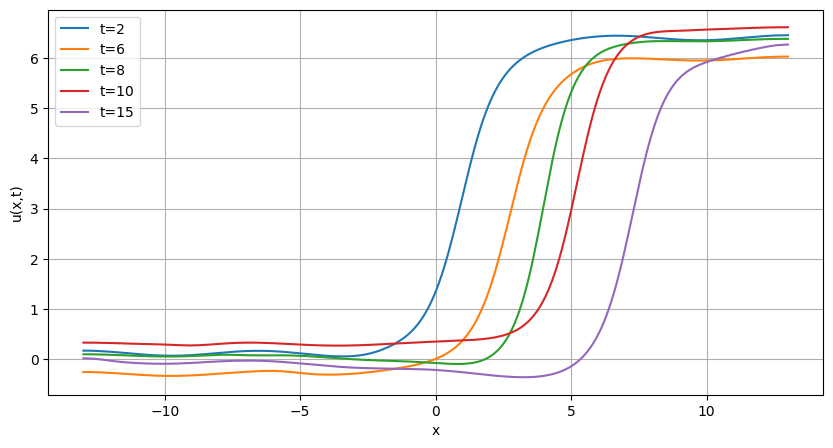}
    \end{minipage}
    \hfill
    \begin{minipage}{0.49\textwidth}
        \centering
        \includegraphics[width=\textwidth]{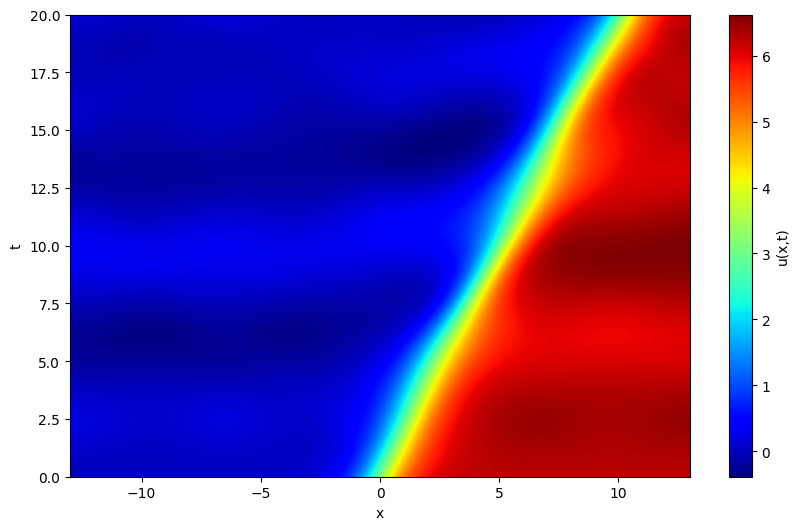}
    \end{minipage}
    \caption{Graphs of the original sine-G equation}
    \label{fig1}
\end{figure}
\begin{figure}[H]
    \centering
    \begin{minipage}{0.49\textwidth}
        \centering
        \includegraphics[width=\textwidth]{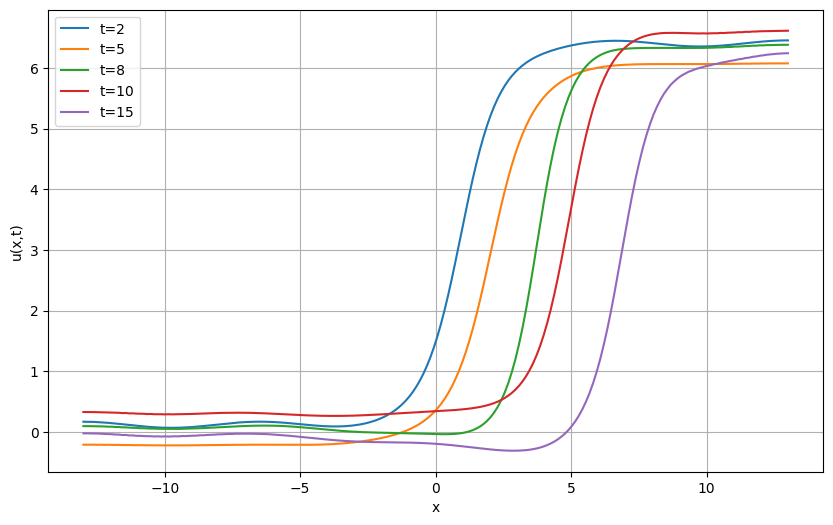}
    \end{minipage}
    \hfill
    \begin{minipage}{0.49\textwidth}
        \centering
        \includegraphics[width=\textwidth]{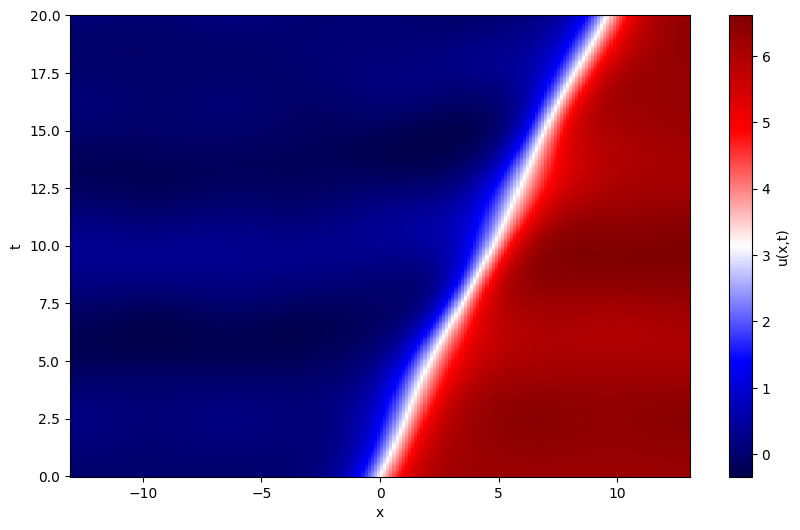}
    \end{minipage}
    \caption{Graphs of the perturbed sine-G equation}
    \label{fig2}
\end{figure}

In Fig. \ref{fig3} shows the solutions of the linearized equation for two cases: the left colormap corresponds to \( n = 1 \), and the right to 
\( n = 4 \), based on the following formulation
\begin{align*}
 & \eta_{tt} - \eta_{xx} + \cos \phi \cdot \eta = \cos(n\pi x/T)+ 2\cos(n\pi t/T), \\
 & \eta_x(-L,t) = \eta_x(L,t) = 0, \quad \eta(x,0) = \eta_t(x,0) = 0.
\end{align*}
The linearized equation exhibits high sensitivity to the frequency of the perturbation term. When \( n = 4 \), the resulting colormap closely resembles those of Figs. \ref{fig1} and \ref{fig2}, indicating good agreement with the nonlinear system. However, for \( n = 1 \), the response deviates significantly, showing agreement only in localized regions. To improve the fidelity of the approximation, particularly over longer time intervals, a higher-order Taylor expansion in \( \epsilon \) is required for a more accurate representation of the system’s behavior.

\begin{figure}[H]
    \centering
    \begin{minipage}{0.49\textwidth}
        \centering
        \includegraphics[width=\textwidth]{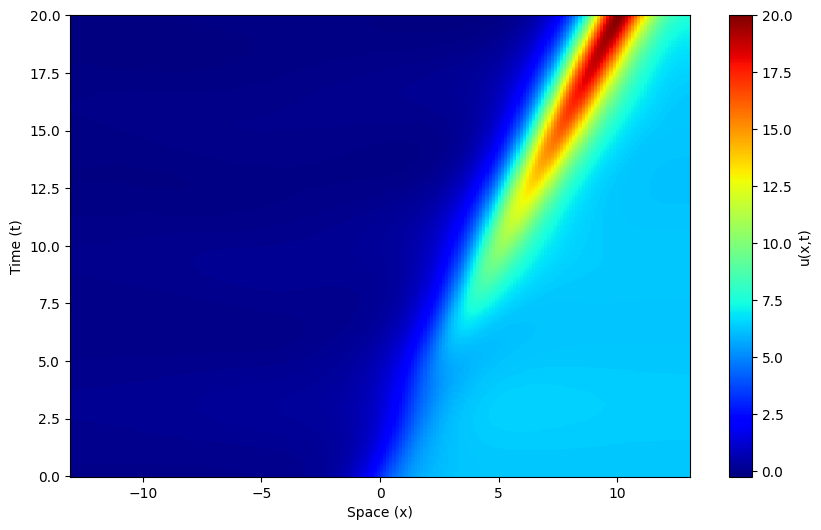}
    \end{minipage}
    \hfill
    \begin{minipage}{0.49\textwidth}
        \centering
        \includegraphics[width=\textwidth]{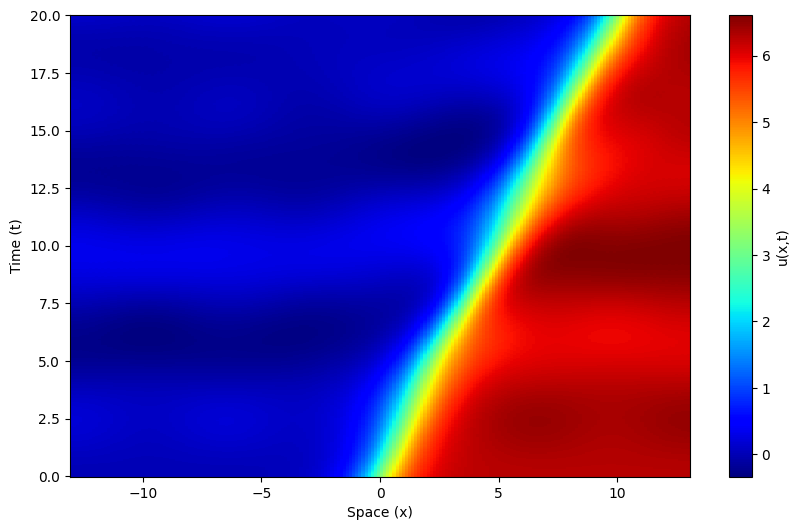}
    \end{minipage}
    \caption{Graphs of the linearized sine-G equation}
    \label{fig3}
\end{figure}

Fig. \ref{fig4} presents the results when the initial conditions are modified to \( \eta(x, 0) = \cos(4\pi x / L) \) and \( \eta_t(x, 0) = -4\pi / L \sin(4\pi x / L) \). The sinusoidal form of the initial data induces pronounced oscillatory behavior, particularly near the region where $u(x,t) \approx 0$.

\begin{figure}[H]
    \centering
    \begin{minipage}{0.49\textwidth}
        \centering
        \includegraphics[width=\textwidth]{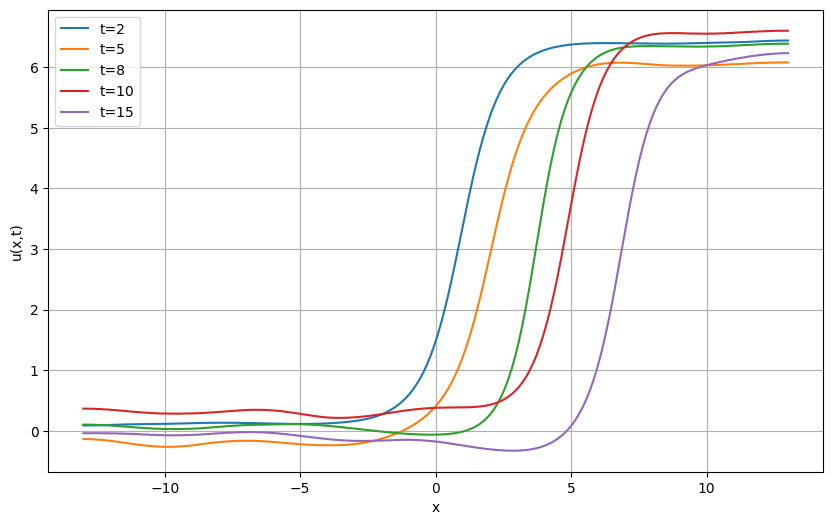}
    \end{minipage}
    \hfill
    \begin{minipage}{0.49\textwidth}
        \centering
        \includegraphics[width=\textwidth]{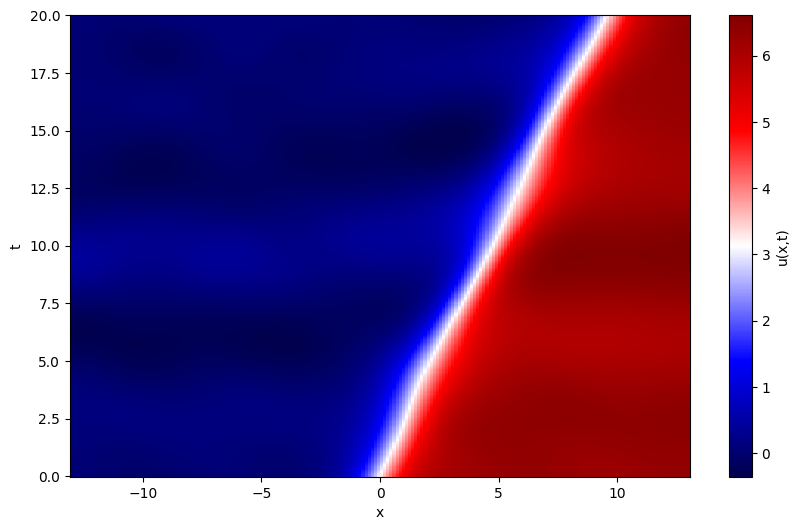}
    \end{minipage}
    \caption{Graphs of the perturbed sine-G equation}
    \label{fig4}
\end{figure}


\section{Neural Network-Based Estimation of Initial Conditions}

We address the inverse problem of determining the initial conditions \( u_0(x) \) and \( v_0(x) \) for the nonlinear damped sine-Gordon equation
\[
\eta_{tt} - \eta_{xx} + \epsilon^{-1}(\sin(\phi + \epsilon \eta) - \sin \phi) = g(x,t),
\]
posed on the spatial domain \( [-L, L] \) with \( L = 13 \), and over the time interval \( [0, T] \) with \( T = 20 \). The system is equipped with homogeneous Neumann boundary conditions, and the initial conditions are specified as
\[
\eta(x,0) = u_0(x), \quad \eta_t(x,0) = v_0(x).
\]
Assuming the forcing term \( g \in L^2(0,T;V) \), and the initial data satisfy \( u_0 \in D(\mathcal{A}) \), \( v_0 \in V \), Theorem \ref{thm3} ensures that the solution \( \eta \) has the following regularity
\[
\eta \in C([0,T];D(\mathcal{A})), \quad \eta' \in C([0,T];V), \quad \eta'' \in L^2(0,T;H).
\]

We define the admissible set of initial data as  
\[
Q_{ad} = D(\mathcal{A}) \times V,
\]  
which is compactly embedded in \( V \times H \), and hence also in \( L^2(0,T;H) \times L^2(0,T;H) \) due to the solution’s regularity. This compact embedding guarantees the continuity of the solution map \( q \mapsto \eta(q) \), and the weak lower semicontinuity of the objective functional
\[
J(q) = \|\eta(q) - z_1\|_{L^2(0,T;H)}^2 +  \|\eta_t(q) - z_2\|_{L^2(0,T;H)}^2,
\]
where \( q = (u_0, v_0) \in Q_{ad} \), and \( z_1 \), \( z_2 \) denote the desired solution and its time derivative, respectively, observed via the relation \( \epsilon^{-1}(u - \phi) \).  

Thus, the problem reduces to minimizing \( J(q) \) over \( Q_{ad} \), and the compactness of \(Q_{ad} \) guarantees the existence of a minimizer $\hat{q} \in Q_{ad}$. To guarantee the uniqueness of the minimizer \( \hat{q} \), it is sufficient to add a regularization term involving \( q \) to the objective functional \( J(q) \). The resulting functional becomes strictly convex, ensuring the existence of a unique optimal initial condition.

To solve this inverse problem numerically, we adopt a data-driven approach. Instead of directly solving the PDE-constrained optimization problem, we train a neural network using solution data generated from synthetic initial conditions. Specifically, we generate 50 initial profiles parameterized by \( \omega \in [n \pi /L-0.5, n \pi /L+0.5] \)
\[
u_0(x;\omega) = \cos(\omega x), \quad v_0(x;\omega) = -\omega \sin(\omega x).
\]
For each \( \omega \), the forward PDE is solved using the central difference method. The resulting solutions \( \eta(x,t;\omega) \) and \( \eta_t(x,t;\omega) \) are sampled at selected space-time points to construct the training dataset consisting of input-output pairs
\[
(x, \omega, t) \mapsto (\eta(x,t), \eta_t(x,t)),
\]
which the neural network is trained to reproduce.

In the training dataset, only the data corresponding to the target frequency 
$\omega=n \pi /L$ are perturbed with Gaussian noise. This models realistic observational uncertainty while ensuring that the network learns from clean PDE-generated trajectories at other frequencies.

The network \( \mathcal{F}_\theta : \mathbb{R}^3 \to \mathbb{R}^2 \) is a fully connected feedforward model with trainable parameters \( \theta \). It consists of an input layer of size 3, followed by four hidden layers of widths 64, 128, 64, and 64, with ReLU activation. The output layer yields two values \( (\hat{\eta}, \hat{\eta}_t) \). The network is trained using the Adam optimizer with a learning rate of \( 10^{-3} \), for up to 100,000 epochs with full-batch learning. The loss function is defined as
\[
\mathcal{L}(\theta) = \text{MSE}[\hat{\eta}, \eta_{\text{data}}] +  \text{MSE}[\hat{\eta}_t, \eta_{t,\text{data}}],
\]
balancing accuracy across both components.

In our setting, the external force is chosen as \( g(x,t) =\cos(n \pi x/L)+ 2\cos(n \pi t/T) \), which belongs to \( L^2(0,T;V) \). Under this condition, \( \eta \) and \( \eta_t \) are continuous in \( (x,t) \), justifying the pointwise loss function \( \mathcal{L}(\theta) \). 

Training includes early stopping based on a loss threshold or plateauing. The implementation is carried out in PyTorch with GPU acceleration.

A key feature of this approach is that the PDE is used only for data generation, not embedded into the network itself. This makes it a fully data-driven method, in contrast to physics-informed approaches, and allows application in cases where governing equations are partially unknown or costly to evaluate.

After training, the network is evaluated at \( t = 0 \) and fixed \( \omega = n \pi /L \) to reconstruct the initial conditions
\[
u_{0NN}(x) = \hat{\eta}(x, \omega=n \pi /L, t=0; \theta), \quad v_{0NN}(x) = \hat{\eta}_t(x, \omega=n \pi /L, t=0; \theta).
\]
These estimated functions are not explicitly trained but arise implicitly from the optimized parameters \( \theta \). Consequently, the inverse problem is reformulated as the following unconstrained optimization
\[
\min_{\theta} \, \mathcal{L}(\theta),
\]
where the learned initial condition is recovered by evaluating the trained network. The compact embedding \( Q_{ad} \hookrightarrow V \times H \hookrightarrow L^2(0,T;H) \) ensures that the output remains within the proper admissible function space.

The performance of the network depends on the data volume more data improve accuracy but increase cost, while fewer data reduce accuracy. Since the goal is to estimate spatial functions \( u_0(x), v_0(x) \), spatial resolution is more critical than temporal. In practice, sampling over the full spatial domain at only two time points, \( t = 0 \) and \( t = \Delta t \), is sufficient. Additional time samples can improve performance but also raise computational costs.

In our simulations, the spatial domain is discretized with \( N_x = 201 \) points and the time domain with \( N_t = 386 \) steps, using a CFL number of 0.2. This corresponds to \( \Delta t = 0.5020 \) and \( \Delta x = 0.1300 \). Smaller values of 
$n$  lead to faster convergence in identifying the initial conditions, whereas larger values significantly increase the computational load. For this reason, we set $n=4$ in this study.

Table \ref{tabloss_summary} presents the downsampling indices along with the corresponding values of individual loss terms, the number of epochs, and the mean squared errors (MSE) for the estimated initial conditions. Here, \( N(t) \) denotes the number of selected time points, while \( N(x) \) represents the number of spatial sampling points used during training. The training process was halted when the total loss dropped below the threshold of $10^{-3}$. For more accurate estimations, this threshold can be further lowered, though at the expense of increased computational cost.

\begin{table}[h!]
\centering
\caption{Downsampling data and corresponding loss values}
\begin{tabular}{|c|c|c|c|c|c|c|c|}
\hline
N(t) & N(x) & \text{Epoch} &  \( \text{Loss}_{\eta} \) & \( \text{Loss}_{\eta_t} \) & \( \text{MSE}_{u_0} \) & \( \text{MSE}_{v_0} \) & Noise \\
\hline
2  & 50 & 24002 &  0.000368 & 0.000829 & 0.000457 & 0.000557 & 0 \\
3  & 50 & 27987 &  0.000559 & 0.000910 & 0.000418 & 0.000590 & 0 \\
5  & 50 & 38981 &  0.000495 & 0.001086 & 0.000421 & 0.000642 & 0 \\
3  & 50 & 42436 &  0.000819 & 0.001331 & 0.000571 & 0.000791 & 0.05 \\
5  & 50 & 27303 &  0.000756 & 0.001061 & 0.000484 & 0.000784 & 0.05 \\
\hline
\end{tabular}
\label{tabloss_summary}
\end{table}

As shown in Fig. \ref{fig5}, the estimated initial conditions \( u_0(x) \) and \( v_0(x) \) are obtained from data at \( N(t) = 3 \) time points with a noise level of 0.05. Interestingly, the training converges faster in this noisy case than in the noise-free case, possibly because of a regularization effect of the noise.

\begin{figure}[H]
\centering
\includegraphics[width=\textwidth]{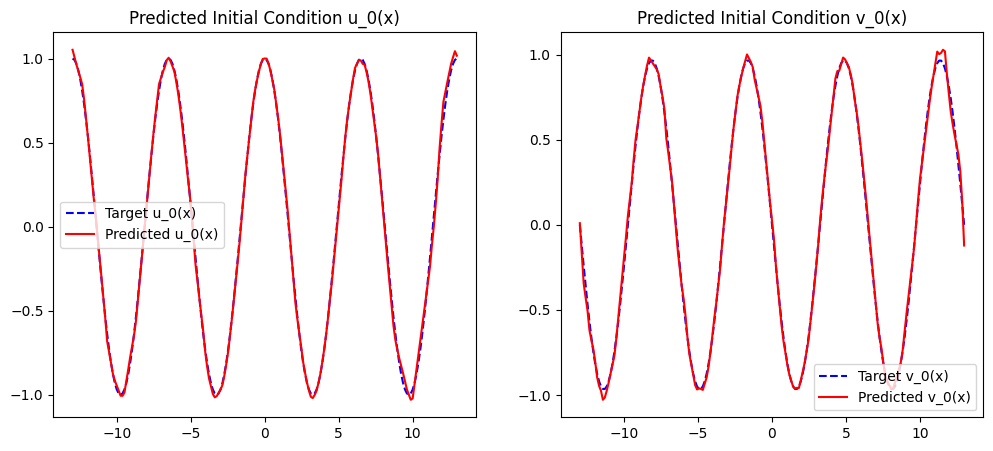}
\caption{The estimated initial conditions \( u_0(x) \) and \( v_0(x) \)obtained from data sampled at \( N(t) = 3 \) time points, under a noise level of 0.05.}
\label{fig5}
\end{figure}

\section{Conclusion}

This study examined the effects of perturbations on the stability of solitons within the framework of the sine-Gordon equation. A rigorous mathematical framework was established to analyze the boundedness and stability of perturbative solutions under external influences such as damping and driving forces. Numerical simulations confirmed that the solutions behave well under small perturbations.

In addition, a neural network-based approach was proposed to estimate the initial conditions, thus improving the robustness and accuracy of the perturbation analysis. Although the unknown initial functions in this study were limited to two sinusoidal components in space and time, a broader class of initial conditions can be represented as linear combinations of the operator's eigenfunctions \( \mathcal{A} \), which offers greater flexibility to model diverse initial profiles.

In general, the findings provide deeper insight into the dynamics of solitons and underscore their potential applications in quantum computing and related fields.

\end{document}